\theoremstyle{plain}
\newtheorem{thm}{Theorem}[section]
\newtheorem{lem}[thm]{Lemma}
\newtheorem{prop}[thm]{Proposition}
\newtheorem{alg}[thm]{Algorithm}
\theoremstyle{definition}
\newtheorem{defn}[thm]{Definition}
\newtheorem{ex}[thm]{Example}
\mathchardef\semic="303B
\newcommand{\dirac}{{\mathbf D}}
\newcommand{\wedg}{\mathbin{\scriptstyle{\wedge}}}
\newcommand{\inv}[1]{{\widehat{#1}}}
\newcommand{\rev}[1]{\overline{#1}}
\newcommand{\R}{{\mathbf R}}
\newcommand{\C}{{\mathbf C}}
\newcommand{\mH}{{\mathcal H}}
\newcommand{\im}{\text{{\rm Im}}\,}
\newcommand{\clos}[1]{\overline{#1}}
\newcommand{\conj}[1]{\overline{#1}}
\newcommand{\barint}{\mbox{$ave \int$}}
\newcommand{\ta}{{\scriptscriptstyle \parallel}}
\newcommand{\no}{{\scriptscriptstyle\perp}}
\newcommand{\pd}{\partial}
\newcommand{\lap}{\Delta}
\newcommand{\pv}{\text{{\rm p.v.\!}}}
\newcommand{\dl}{{\mathcal Dl}}
\def\barint_#1{\mathchoice
            {\mathop{\vrule width 6pt
height 3 pt depth -2.5pt
                    \kern -8.8pt
\intop}\nolimits_{#1}}%
            {\mathop{\vrule width 5pt height
3 pt depth -2.6pt
                    \kern -6.5pt
\intop}\nolimits_{#1}}%
            {\mathop{\vrule width 5pt height
3 pt depth -2.6pt
                    \kern -6pt
\intop}\nolimits_{#1}}%
            {\mathop{\vrule width 5pt height
3 pt depth -2.6pt
          \kern -6pt \intop}\nolimits_{#1}}}
\definecolor{gr}{rgb}   {0.,   0.8,   0. }
\definecolor{bl}{rgb}   {0.,   0.5,   1. }
\definecolor{mg}{rgb}   {0.7,  0.,    0.7}
\newcommand{\Bk}{}
\newcommand{\Rd}{}
\begin{document}

\title[A spin integral equation for electromagnetic scattering]
{A spin integral equation for electromagnetic and acoustic scattering}
\author[Andreas Ros\'en]{Andreas Ros\'en$\,^1$}
\thanks{$^1\,$Formerly Andreas Axelsson}
\address{Andreas Ros\'en\\Mathematical Sciences, Chalmers University of Technology and University of Gothenburg\\
SE-412 96 G{\"o}teborg, Sweden}
\email{andreas.rosen@chalmers.se}

\begin{abstract}
We present a new integral equation for solving the Maxwell scattering problem against a perfect conductor. 
The very same algorithm also applies to sound-soft as well as sound-hard Helmholtz scattering, and in fact 
the latter two can be solved in parallel in three dimensions.
Our integral equation does not break down at interior spurious resonances,
and uses spaces of functions without any algebraic or differential constraints.
The operator to invert at the boundary involves a singular integral operator closely related to the three dimensional Cauchy singular integral, 
and is bounded on natural function spaces and depend analytically on the wave number.
Our operators act on functions with pairs of complex two by two matrices as values, 
using a spin representation of the fields.
\end{abstract}

\keywords{Maxwell scattering, integral equation, spurious resonances}
\subjclass[2010]{45E05, 78M15, 15A66}

\maketitle

%
%
%
\section{Introduction}


In this paper, we present a new singular integral equation which reads
\begin{equation}   \label{eq:rotintro}
  h(x) -2M(x)\pv\int_{\partial\Omega} \Psi_k(y-x) h(y) dy= \phi(x), \qquad x\in\partial\Omega,
\end{equation} 
and solves the Maxwell as well as the Helmholtz scattering problem, with wave number $k$, in the unbounded connected complement  $\Omega \subset\R^3$ of a compact set, with prescribed boundary data $\phi$ on $\partial\Omega$, which is assumed
to be at least Lipschitz regular.
All the functions $\phi$, $h$ and $\Psi_k$, the latter which essentially is the gradient of the fundamental 
solution of the fundamental solution $\Phi_k$ to Helmholtz equation,
take values in the eight dimensional space of pairs of 
complex $2\times 2$ matrices.
The multiplier $M(x)$ denotes an explicit linear map of such matrix-pairs determined by the 
unit normal vector field $n(x)$.
Depending on whether one wants to solve the Maxwell, Dirichlet Helmholtz or 
Neumann Helmholtz problem, one embeds the boundary data differently in the matrix-pair 
valued function $\phi$, and after having solved the integral equation for $h$ and computed 
the field $F(x)$ with the corresponding integral for $x\in\Omega$, one can extract the 
electric and magnetic fields $E$ and $H$, or the acoustic wave $u$ and its gradient $\nabla u$, 
from this matrix-pair valued function
$F$.

The derivation of the integral equation \eqref{eq:rotintro}, which we refer to as the spin integral equation, 
use a framework for solving 
Dirac transmission problems on Lipschitz domains, from the author's thesis~\cite{Ax},
published in the four papers \cite{Ax1, Ax2, AMc, Ax3}.
This build on earlier works by McIntosh and Mitrea~\cite{McMi} and
Axelsson, Grognard, Hogan and McIntosh~\cite{AGHMc}, where Maxwell's
equations were studied using Clifford analysis. 
Indeed, the above matrix-pairs are nothing but a concrete matrix representation of the 
complex three dimensional Clifford algebra.
The idea to write Maxwell's equation as what we now call a Dirac equation using Clifford algebra, is old and
dates back to the original works of Maxwell and Clifford, and was rediscovered by M. Riesz.
It is also well known that there are Cauchy type reproducing formulas for solutions to such 
Dirac equations, which generalize classical function theory for analytic functions in the 
complex plane to some extent. These Cauchy integrals are fundamental to this paper.

The spin integral equation \eqref{eq:rotintro} solves the boundary value problem in the
unbounded exterior domain $\Omega$, for a prescribed tangential part of the fields at $\partial\Omega$,
in parallel with an auxiliary and complementary boundary value problem in the bounded interior
domain. We do not choose this auxiliary boundary value problem to be with prescribed normal part of the fields at the boundary,
which is the straightforward choice, since this would cause non-invertibility of the integral equation
at some discrete set of real resonances. Instead, the novelty with the spin integral equation
is that we impose a non-selfadjoint boundary condition for the auxiliary interior problem, which is a natural local elliptic boundary condition 
for the spin Dirac operator. This ensures that the spin integral equation is invertible for any
wave number $\im k\ge 0$, $k\ne 0$, and therefore solves the problem with spurious interior resonances 
in computational electromagnetics.

A problem with the combined field integral equation for Maxwell's equations, which is a classical way to modify the 
integral equations to avoid spurious interior resonances, see for example
Colton and Kress~\cite[Thm. 6.19]{CK},
is the low frequency breakdown, in that the numerical accuracy deteriorates as $k\to 0$.
There are integral equations available for Maxwell scattering which do not suffer from such low-frequency 
breakdown or interior spurious resonances: The generalized Debye potentials of Epstein and Greengard~\cite{EG1}.
Their method however requires the inversion of surface Laplace equations.
Our spin integral equation does not require any extra complications like that.
Furthermore the operator depends holomorphically
on $k$ and the only thing that happens at $k=0$ is that the operator fails to be invertible, 
but is still a Fredholm operator.

Unlike classical integral equations for Maxwell's equations, the spin integral equation uses function spaces without
any algebraic constraints, like tangential vector fields, 
or any differential constraints, like control of surface divergence.
In the combined field integral equation for Maxwell's equations as well as in 
the classical integral equation for solving the Neumann Helmholtz equation without spurious interior
resonances, there appears hypersingular integral 
operators.
In \eqref{eq:rotintro}, the integral operator is a singular integral, but it is known to be bounded for example  on $L_p$, $1<p<\infty$, and H\"older spaces 
$C^\alpha$, $0<\alpha<1$, on smooth surfaces.
Boundedness on $L_p$ in fact only requires a Lipschitz boundary.
However, the spin integral operator $h(x)\mapsto M(x)\pv\int_{\partial\Omega} \Psi_k(y-x) h(y) dy$ is never compact, 
being the product of an invertible singular integral operator and an invertible multiplication operator.
Nevertheless, the spin integral equation is bounded and invertible in $L_2(\partial\Omega)$ on any Lipschitz surface,
and for smooth surfaces also in $L_p(\Omega)$, $1<p<\infty$, and in H\"older space $C^\alpha(\partial\Omega)$, $0<\alpha<1$.

The outline of this paper is as follows.
The reader who wants a self contained formulation of the algorithm for solving the 
Maxwell and Helmholtz scattering problems with the spin integral equation, finds this in Section~\ref{sec:alg}.
The reader who want a derivation of the spin integral equation finds it in Sections~\ref{sec:bvpss}
and \ref{sec:constr}. Before this derivation, some background on Clifford algebra, Dirac equations
and Cauchy integrals is surveyed in Section~\ref{sec:Cauchyint}.
In the final Section~\ref{sec:compare} we explain the relationship between the spin integral 
equation and the classical double and single, and magnetic dipole integral equations for solving
the Helmholtz and Maxwell scattering problems. 

It is our hope that the spin integral equation formulated in this paper will prove useful for the
numerical solution of scattering problems, in particular for the Maxwell perfect conductor scattering problem.
In this paper, we have aimed to \Rd simplify \Bk the algebra by  limiting ourselves to the three dimensional Clifford algebra,
which only makes use of the classical scalar and vector products, in a clever combination to 
yield an associative Clifford product. 
But similar algorithms work in two dimensions as well as in higher dimension,
see \cite{Ax}. In even dimensions $2n$, it will use $2^n\times 2^n$ complex matrices (not pairs)
and a Hankel function of the first kind for the integral kernel.

\section{The spin integral equation}    \label{sec:alg}

In this section we state in detail our algorithm for solving the Maxwell and Helmholtz scattering problems
considered in this paper.
Throughout this paper we assume that $\R^3= \Omega^+\cap \partial \Omega\cup\Omega$ disjointly,
where $\Omega^+$ is a bounded domain with strongly Lipschitz regular boundary $\partial\Omega= \partial \Omega^+$.
That is we assume that $\partial\Omega$ locally is the graph (in some direction) of an at least Lipschitz 
regular function.
Denote by $n$ the unit vector field on $\partial\Omega$ pointing into $\Omega=\Omega^-$, which is at least a 
measureable vector field.
We further assume that the unbounded domain $\Omega$ is connected.
Throughtout this paper, we assume that the wave number $k$ appearing in Maxwell's and Helmholtz'
equations satisfies
$$
k\in \C\setminus\{0\} \text{ and } \im k\ge 0.
$$

\subsection{The scattering problems}

(M)
Consider Maxwell's equations in $\Omega$ consisting of a uniform, isotropic and conducting material, so that electric permittivity $\epsilon$,
magnetic permeability $\mu$ and conductivity $\sigma$ are constant and scalar.
We study electromagnetic wave propagation in $\Omega$, with material
constants $\epsilon, \mu,\sigma$, and $\Omega^+$ is assumed to be a perfect conductor.
Maxwell's equations in $\Omega$ then take the form 
\begin{equation}  \label{eq:maxwell}
  \begin{cases}
     \nabla \cdot  H=0, \\
     \nabla \times E= ik H, \\
     \nabla\times H=-ikE, \\
     \nabla\cdot E=0,
  \end{cases}
\end{equation}
for the (rescaled) electric and magnetic fields $E$ and $H$, where the wave number $k$ satisfies
$k^2= (\epsilon+ i\sigma/\omega)\mu \omega^2$.
The data in the scattering problem we seek to solve are incoming electric and magnetic fields $E^0$ and $H^0$
solving \eqref{eq:maxwell} in $\Omega$.
Our problem is to solve for the scattered electric and magnetic fields $E$ and $H$ solving \eqref{eq:maxwell} in $\Omega$, an inhomogeneous boundary condition at $\partial\Omega$ and the Silver--M\"uller radiation condition at infinity.
At $\partial\Omega$, since the fields vanish
in the perfect conductor $\Omega^+$, we have boundary conditions
\begin{equation}  
  \begin{cases}
     n \cdot  (H+H^0)=0, \\
     n \times (E+E^0)= 0, \\
     n\times (H+H^0)=J_s, \\
     n\cdot (E+E^0)=\rho_s.
  \end{cases}
\end{equation}
The radiation condition is that
\begin{equation}   \label{eq:silvermuller}
     \begin{cases}
     \tfrac x{|x|} \cdot  H(x)=o(|x|^{-1} e^{\im k |x|}), \\
     \tfrac x{|x|} \times E(x)- H(x)  =o(|x|^{-1} e^{\im k |x|}),\\
     \tfrac x{|x|} \times H(x)+E(x)=o(|x|^{-1} e^{\im k |x|}), \\
     \tfrac x{|x|} \cdot E(x)=o(|x|^{-1} e^{\im k |x|}),
  \end{cases}
\end{equation}
at infinity.
We remark that for $\im k>0$, the exponential bound on the growth combined with Maxwell equations will
in fact self-improve to an exponential decay.  

(H)
Turning to acoustic wave propagation, we also consider the
 Helmholtz equation
\begin{equation}    \label{eq:helmh}
   \Delta u +k^2 u=0
\end{equation}
in $\Omega$ with wave number $k$, for a scalar function $u:\Omega\to \C$.
We assume that the incoming wave $u^0$ solves \eqref{eq:helmh} in $\Omega$, and we want to 
compute a scattered wave $u$ also solving \eqref{eq:helmh} in $\Omega$, and satisfying either
the Dirichlet (sound soft) boundary conditions
$$
  u+u^0=0,
$$
or alternatively the Neumann (sound hard) boundary conditions
$\frac{\partial u}{\partial n} +  \frac{\partial u^0}{\partial n} =0$,
on $\partial\Omega$, and at infinity the Sommerfield radiation condition
\begin{equation}    \label{eq:sommerfield}
  \frac{\partial u}{\partial r}-iku= o(|x|^{-1}e^{\im k |x|}).
\end{equation}

\subsection{The algorithm}  

From the fundamental solution 
$
  \Phi_k(x)= -\frac{e^{ik|x|}}{4\pi|x|}
$
for the Helmholtz operator $\lap+k^2$, we compute its gradient vector field 
$$
  \nabla \Phi_k(x)= \left( -\frac x{|x|^2}+ik\frac x{|x|} \right)\Phi_k(x).
$$
We represent vectors by pairs of complex $2\times 2$ matrices, using the classical Pauli spin matrices,
and in $(\C^{2\times 2})^2$, the linear space of pairs of complex $2\times 2$ matrices, we add and multiply
component-wise so that
$(a_1,a_2)+ (b_1,b_2)= (a_1+b_1,a_2+b_2)$ and
$(a_1,a_2)(b_1,b_2)= (a_1b_1,a_2b_2)$.
Denote the identity element $I=\Big( \begin{bmatrix} 1 & 0 \\ 0 & 1 \end{bmatrix},
  \begin{bmatrix} 1 & 0 \\ 0 & 1 \end{bmatrix} \Big)$.

\begin{defn}   \label{eq:rhorepr}
   Let $\{e_1, e_2, e_3\}$ be the standard basis for $\R^3$ and define
$$
  \sigma_1=\begin{bmatrix} 0 & 1 \\ 1 & 0 \end{bmatrix}, \quad
  \sigma_2=\begin{bmatrix} 0 & -i \\ i & 0 \end{bmatrix}, \quad
  \sigma_3=\begin{bmatrix} 1 & 0 \\ 0 & -1 \end{bmatrix}.
$$
Define the complex linear map $\rho: \R^3\to (\C^{2\times 2})^2$ given by
$$
  \rho(e_1)= (\sigma_1,\sigma_1), \quad
    \rho(e_2)= (\sigma_2,\sigma_2), \quad
  \rho(e_3)= (\sigma_3,-\sigma_3).
$$
\end{defn}

For example, the unit normal vector $n=n_1e_1+n_2e_2+n_3e_3=n(y)$ at $y\in \partial \Omega$ 
is represented by the matrix pair 
$$
  \rho(n)= \Big( \begin{bmatrix} n_3 & n_1-in_2 \\ n_1+in_2 & -n_3 \end{bmatrix} ,
  \begin{bmatrix} -n_3 & n_1-in_2 \\ n_1+in_2 & n_3 \end{bmatrix} \Big).
$$

The integral equation for solving the scattering problems makes use of the following operators acting on 
functions  $h: \partial \Omega\to (\C^{2\times 2})^2$.

\begin{itemize}
\item
The principal value 
singular integral operator
$$
  R_k h(x)= 2\pv\int_{\partial \Omega} \Phi_k(x-y) \left( \rho\Big(\frac{x-y}{|x-y|^2}-ik\frac{x-y}{|x-y|} \Big)-ik I\right) h(y)  dy,
$$  
$x\in\partial\Omega$,
where $dy$ denotes surface measure on $\partial\Omega$.

\item
The non-singular integral operator corresponding to $R_k$ is
$$
  R^\Omega_k h(x)= -\int_{\partial \Omega} \Phi_k(x-y) \left( \rho\Big(\frac{x-y}{|x-y|^2}-ik\frac{x-y}{|x-y|} \Big)-ik I \right) h(y) dy,
$$
for $x\in\Omega$. Note our choice of different normalization.

\item
The multiplication operator 
$$
  (Mh)(x)= h(x)+ \inv{h(x)}\rho(n(x))+ \rho(n(x))\inv{h(x)}\rho(n(x)), \qquad x\in\partial\Omega.
$$
Here $\inv h$ denotes an auxiliary {\em involution} operation in $(\C^{2\times 2})^2$, given by
$$
  \inv h= \Big( \begin{bmatrix} b_{11} & -\rev a_{21} \\ -\rev a_{12} & b_{22} \end{bmatrix},
  \begin{bmatrix} a_{11} & -\rev b_{21} \\ -\rev b_{12} & a_{22} \end{bmatrix} \Big),
$$
  for
$h= \Big( \begin{bmatrix} a_{11} & a_{12} \\ a_{21} & a_{22} \end{bmatrix},
  \begin{bmatrix} b_{11} & b_{12} \\ b_{21} & b_{22} \end{bmatrix} \Big)$.

\end{itemize}

We can now state our algorithm for Maxwell and Helmholtz scattering.
  Define boundary data $g$ as follows.
  
(M)
  For Maxwell's equations,
  let $E^0$ and $H^0$ be incoming electric and magnetic fields.
   Define the tangential trace $E^0_T:= E^0|_{\partial \Omega}- (E^0|_{\partial \Omega}\cdot n)n= 
   (\epsilon_1, \epsilon_2, \epsilon_3)$ of the electric field and the
  normal trace $H^0_N:= (H^0|_{\partial \Omega}\cdot n)n= (\eta_1,\eta_2,\eta_3)$ of the magnetic field, which will satisfy the constraint
  $\nabla_T\times E^0_T = ik H^0_N$, and let
\begin{multline*}
  g=   - \Big( \begin{bmatrix} \epsilon_3 +i\eta_3 & \epsilon_1+\eta_2+i(-\epsilon_2+\eta_1) \\ \epsilon_1-\eta_2+i(\epsilon_2+\eta_1) & -\epsilon_3-i\eta_3 \end{bmatrix} , \\
  \begin{bmatrix} -\epsilon_3 +i\eta_3 & \epsilon_1-\eta_2+i(-\epsilon_2-\eta_1) \\ \epsilon_1+\eta_2+i(\epsilon_2-\eta_1) & \epsilon_3-i\eta_3  \end{bmatrix} \Big).
\end{multline*}

(H)
  For Helmholtz equations,
  let $u^0$ be an incoming wave satisfying Helmholtz equation \eqref{eq:helmh} in $\Omega\subset \R^3$.
  To solve the Dirichlet Helmholtz scattering problem, write the tangential gradient as
  $\nabla_T u^0=(f_1^\ta, f_2^\ta, f_3^\ta)$ at $\partial\Omega$, and
  let
$$
  g= 
 - \Big( \begin{bmatrix} iku^0+f_3^\ta  & f_1^\ta-if_2^\ta \\ f_1^\ta+if_2^\ta & iku^0-f_3^\ta  \end{bmatrix} , \\
   \begin{bmatrix} iku^0-f_3^\ta  & f_1^\ta-if_2^\ta \\ f_1^\ta+if_2^\ta & iku^0+f_3^\ta  \end{bmatrix}  \Big).
$$
 To solve the
 Neumann Helmholtz scattering problem instead, write the normal gradient as
  $\tfrac {\partial u^0}{\partial n}n=(f_1^\no, f_2^\no, f_3^\no)$, and
  let
$$
  g= 
 - \Big( \begin{bmatrix} if_3^\no & f_2^\no+if_1^\no \\ -f_2^\no+if_1^\no & -if_3^\no  \end{bmatrix} , \\
  \begin{bmatrix} if_3^\no  & -f_2^\no-if_1^\no \\ -f_2^\no-if_1^\no & -if_3^\no  \end{bmatrix}  \Big).
$$

\begin{alg}   \label{alg:rot}
Assume $\im k\ge 0$, $k\ne 0$.
Given such boundary data $g:\partial \Omega\to (\C^{2\times 2})^2$ as above, depending on which scattering
problem we consider, solve the singular integral equation
\begin{equation}   \label{eq:spinmatrixeq}
   h(x)- (MR_k h)(x)=2g(x)+2 \inv{g(x)}\rho(n(x)),\qquad x\in\partial\Omega,
\end{equation}
for $h:\partial \Omega\to (\C^{2\times 2})^2$.
Compute
$$
  F(x)= (R_k^\Omega h)(x), \qquad x\in\Omega,
$$
and write this function $F:\Omega\to (\C^{2\times 2})^2$ as
\begin{multline*}
  F= 
   \Big( \begin{bmatrix} \alpha+a_3 +i(b_3+\beta) & a_1+b_2+i(-a_2+b_1) \\ a_1-b_2+i(a_2+b_1) & \alpha-a_3+i(-b_3+\beta) \end{bmatrix} , \\
  \begin{bmatrix} \alpha-a_3 +i(b_3-\beta) & a_1-b_2+i(-a_2-b_1) \\ a_1+b_2+i(a_2-b_1) & \alpha+a_3+i(-b_3-\beta)  \end{bmatrix} \Big).
\end{multline*}

Depending on which scattering problem we consider, we have the following solution.

(M)
In case of Maxwell data $g$, we have $\alpha=\beta=0$, and $E_n=a_n$ and $H_n=b_n$, $n=1,2,3$, are the components of the uniquely determined scattered electric and 
magnetic fields, which satisfies the boundary conditions at $\partial\Omega$, the Maxwell equations in $\Omega$ and the Silver--M\"uller radiation condition at infinity.

(H)
In case of Helmholtz Dirichlet data $g$, we have $b_n=0$, $n=1,2,3$, $\beta=0$,
and $\alpha=iku$ and $a_n= \partial_n u$, $n=1,2,3$, where $u$ is the uniquely determined scattered wave, which satisfies the Dirichlet boundary conditions at $\partial\Omega$, the Helmholtz equation in $\Omega$ and the Sommerfield radiation condition at infinity.

In case of Helmholtz Neumann data $g$, we have $a_n=0$, $n=1,2,3$, $\alpha=0$,
and $\beta=ikv$ and $b_n= \partial_n v$, $n=1,2,3$, where $v$ is the uniquely determined scattered wave, which satisfies the Neumann boundary conditions at $\partial\Omega$, the Helmholtz equation in $\Omega$ and the Sommerfield radiation condition at infinity.
\end{alg}

The derivation of the spin integral equation \eqref{eq:spinmatrixeq} is found in Section~\ref{sec:NSspin}.
It follows from the results in Section~\ref{sec:bvpss}, that the spin integral equation is invertible 
on $L_2(\partial\Omega)$ for example, for all $\im k\ge 0$, $k\ne 0$.
In Section~\ref{sec:constr}, we show why the particular structure of the datum $g$ will produce 
Maxwell and Helmholtz fields respectively.

\section{The three dimensional Cauchy integral}    \label{sec:Cauchyint}

The algorithm for solving the Maxwell scattering problem, presented in Section~\ref{sec:alg},
was formulated in terms of matrices, as this is well suited for computatons.
For a geometrical understanding, we prefer instead the underlying Clifford algebra framework.
What we used in Section~\ref{sec:alg} was indeed the well known fact from representation theory that
the three dimensional complex Clifford algebra is isomorphic to the matrix-pair algebra $(\C^{2\times 2})^2$,
and $\rho$ from Definition~\ref{eq:rhorepr} is an example of such an isomorphism.

The three dimensional complex Clifford algebra, as a complex linear space, is the same as the
exterior algebra $\wedge \C^3$, and consists of objects, here referred to as multivectors, which we write
\begin{multline}   \label{eq:multvec3D}
  w= \alpha + a_1e_1+ a_2e_2+a_3e_3 \\
  + b_1e_2\wedg e_3+ b_2e_3\wedg e_1+ b_3e_1\wedg e_2+ \beta e_1\wedg e_2\wedg e_3,
\end{multline}
with complex coordinates $\alpha, a_n, b_n, \beta\in\C$.
Here $\{e_1,e_2,e_3\}$ is the standard basis for $\R^3\subset \C^3$, which induces the basis
$\{1, e_1, e_2, e_3, e_2\wedg e_3, e_3\wedg e_1, e_1 \wedg e_2, e_1\wedg e_2\wedg e_3\}$ for the eight-dimensional 
complex linear space $\wedge \C^3$. 
We equip $\wedge\C^3$ with the complex inner product for \Rd which \Bk the above basis is ON.
Note that we consider not only the vectors $\C^3$ but also the scalars $\C$ as subspaces in $\wedge \C^3$, by viewing $1$ as a basis element in $\wedge \C^3$.
Unlike in higher dimension, the geometry of multivectors in $\wedge \C^3$ is not much beyond 
vectors, since the Hodge star map
\begin{multline*}
  {*w}= \alpha e_1\wedg e_2\wedg e_3 + a_1e_2\wedg e_3+ a_2e_3\wedg e_1+a_3e_1\wedg e_2\\
  + b_1e_1+ b_2e_2+ b_3e_3+ \beta,
\end{multline*}
can be used to identify the one-dimensional vectors $\{e_1,e_2,e_3\}$ and the two-dimensional bivectors
$\{e_2\wedg e_3, e_3\wedg e_1, e_1\wedg e_2\}$, as well as
the zero-dimensional scalars $\{1\}$ and the $3$-vectors $\{e_1\wedg e_2\wedg e_3\}$. 

On the complex linear space $\wedge \C^3$, there are two natural associative, but non-commutative, products.
\begin{itemize}
\item The exterior product $\wedg$, relates to the affine structure of $\R^3$ and satisfies $u\wedg u=0$ for all vectors $u\in \C^3$. In terms of scalar and
vector products, the exterior product of a vector $u$ and a multivector $w= \alpha+ a+*b+*\beta$ is
\begin{equation}  \label{eq:extprod}
  u\wedg w= 0 + u\alpha+ *(u\times a)+ *(u\cdot b).
\end{equation}

\item The Clifford product, denoted by juxtaposition, encodes also the euclidean geometry and satisfies $uu= |u|^2$ for all vectors $u\in\C^3$. Note the sign convention we chose here.
In terms of scalar and vector products, the Clifford product of a vector $u$ and a multivector $w= \alpha+ a+*b+*\beta$ is
\begin{equation}   \label{eq:Cliffprod}
  u w= u\cdot a+(u\alpha-u\times b)+ *(u\times a+u\beta)+ *(u\cdot b).
\end{equation}
\end{itemize}

Unlike the exterior product, the Clifford product yields an algebra which is essentially isomorphic 
to a matrix algebra. In the above complex three dimensional case, the following is an explicit but arbitrary 
choice of isomorphism which we use in this paper.

\begin{lem}   \label{lem:matrixrepr}
  The space of multivectors $\wedge \C^3$, equipped with the Clifford product, is isomorphic to the 
  space $(\C^{2\times 2})^2$ of pairs of complex $2\times 2$ matrices, equipped with pairwise matrix
  multiplication as in Section~\ref{sec:alg}, via the isomorphism
\begin{multline*}
  \rho(w)= 
   \Big( \begin{bmatrix} \alpha+a_3 +i(b_3+\beta) & a_1+b_2+i(-a_2+b_1) \\ a_1-b_2+i(a_2+b_1) & \alpha-a_3+i(-b_3+\beta) \end{bmatrix} , \\
  \begin{bmatrix} \alpha-a_3 +i(b_3-\beta) & a_1-b_2+i(-a_2-b_1) \\ a_1+b_2+i(a_2-b_1) & \alpha+a_3+i(-b_3-\beta)  \end{bmatrix} \Big),
\end{multline*}
for $w\in\wedge \C^3$ as in \eqref{eq:multvec3D}.
\end{lem}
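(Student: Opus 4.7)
The plan is to verify that $\rho$ respects the defining Clifford relations on the generators $e_1, e_2, e_3$, invoke the universal property of the Clifford algebra to extend $\rho$ to an algebra homomorphism on all of $\wedge \C^3$, and then read off both the explicit matrix formula and the bijectivity from a direct expansion. Concretely, I would first check the Clifford identities $\rho(e_j)^2 = I$ and $\rho(e_j)\rho(e_k) + \rho(e_k)\rho(e_j) = 0$ for $j \ne k$, where $I = (I_2, I_2)$ denotes the identity pair. This is a componentwise calculation using $\sigma_j^2 = I_2$ and $\sigma_j \sigma_k = -\sigma_k \sigma_j$; the sign flip $-\sigma_3$ in the second slot of $\rho(e_3)$ is absorbed on squaring and contributes an overall $-1$ in one slot of the anticommutator, which does not spoil the vanishing. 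Since formula \eqref{eq:Cliffprod} is precisely the Clifford product on $\wedge \C^3$ for the quadratic form $u \mapsto |u|^2$, the universal property then promotes $\rho|_{\C^3}$ to the required algebra homomorphism $\rho : \wedge \C^3 \to (\C^{2\times 2})^2$.

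Next I would pin down the values of $\rho$ on the remaining six basis multivectors by multiplicativity, using the Pauli identities $\sigma_1\sigma_2=i\sigma_3$, $\sigma_2\sigma_3=i\sigma_1$, $\sigma_3\sigma_1=i\sigma_2$ and $\sigma_1\sigma_2\sigma_3 = iI_2$. This yields $\rho(e_2\wedg e_3) = (i\sigma_1, -i\sigma_1)$, $\rho(e_3\wedg e_1) = (i\sigma_2, -i\sigma_2)$, $\rho(e_1\wedg e_2) = (i\sigma_3, i\sigma_3)$ and $\rho(e_1\wedg e_2 \wedg e_3) = (iI_2, -iI_2)$. Substituting these into the general linear combination with coefficients $\alpha, a_n, b_n, \beta$ and writing out the three Pauli matrices entry by entry produces exactly the matrix-pair stated in the lemma.

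Finally, bijectivity is a dimension count plus a one-line inversion. Both $\wedge \C^3$ and $(\C^{2\times 2})^2$ are complex vector spaces of dimension $8$, and from the closed-form formula the diagonal entries of the first component recover $\alpha \pm a_3$ and $\beta \pm b_3$ while its off-diagonal entries recover $a_1 \pm b_2$ and $b_1 \mp a_2$; in particular $\rho(w)=0$ forces $w=0$, so $\rho$ is injective and hence an algebra isomorphism. I expect no real obstacle beyond bookkeeping in the matrix expansion of the middle step; the conceptual content sits entirely in the verification of the Clifford relations on generators, after which the universal property does all the work.
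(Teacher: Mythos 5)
The paper offers no proof of this lemma at all (it is invoked as a ``well known fact from representation theory''), so your task is to supply one, and your overall route --- check the Clifford relations $\rho(e_j)^2=I$, $\rho(e_j)\rho(e_k)+\rho(e_k)\rho(e_j)=0$ on the generators, extend by the universal property, compute $\rho$ on the remaining basis multivectors via the Pauli identities, and expand --- is the standard and correct way to do it. Your computed values $\rho(e_2\wedg e_3)=(i\sigma_1,-i\sigma_1)$, $\rho(e_3\wedg e_1)=(i\sigma_2,-i\sigma_2)$, $\rho(e_1\wedg e_2)=(i\sigma_3,i\sigma_3)$, $\rho(e_1\wedg e_2\wedg e_3)=(iI_2,-iI_2)$ are right, and substituting them does reproduce the matrix-pair displayed in the lemma entry by entry.

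The one genuine flaw is in your bijectivity step: you claim the entries of the \emph{first} matrix component alone recover $\alpha\pm a_3$, $\beta\pm b_3$, $a_1\pm b_2$, $b_1\mp a_2$. That cannot be true over $\C$: the first component is a linear map onto the $4$-dimensional space $\C^{2\times 2}$, while $\wedge\C^3$ has complex dimension $8$, and from the first slot one can only recover the four complex combinations $\alpha+i\beta$ and $a_j+ib_j$ (the coefficients in the Pauli basis). Concretely, $w=1+i\,e_1\wedg e_2\wedg e_3$ has vanishing first component but $\rho(w)=(0,2I_2)\ne 0$; each single slot annihilates one of the two central idempotents $\tfrac12(1\mp i\,e_1\wedg e_2\wedg e_3)$. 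The fix is immediate and stays within your framework: either combine entries from \emph{both} slots (e.g.\ the $(1,1)$ entry of the first slot plus the $(2,2)$ entry of the second slot gives $2(\alpha+a_3)$, their difference gives $2i(b_3+\beta)$, and similarly for the off-diagonal entries), or, more simply, observe that the eight matrix pairs you computed as images of the basis multivectors are linearly independent --- indeed they span $(\C^{2\times 2})^2$, since $(m,m)$ and $(im,-im)$ together yield $(m,0)$ and $(0,m)$ for each Pauli matrix and the identity --- so $\rho$ is a linear bijection and hence an algebra isomorphism.
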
 

An algebraic operation that we also need, is the involution $\inv w$ of a multivector $w$, 
which is 
\begin{multline}   \label{eq:involution}
  \inv w= \alpha - a_1e_1- a_2e_2-a_3e_3 \\
  + b_1e_2\wedg e_3+ b_2e_3\wedg e_1+ b_3e_1\wedg e_2- \beta e_1\wedg e_2\wedg e_3,
\end{multline}
that is negation of vectors, extended to multivectors.
Note that under the representation $\rho$, this involution corresponds to $\hat h$ from 
Definition~\ref{eq:rhorepr}.
It is an automorphism of $\wedge \C^3$, both with respect to the exterior and Clifford products,
which is very useful. One first example is a Riesz formula
\begin{equation}   \label{eq:riesz}
  u\wedg w= \tfrac 12(uw+\inv w u),
\end{equation}
with expresses the exterior product by a vector $u$, in terms of the Clifford product.

\begin{defn}
The (Hodge--) Dirac is the formally elliptic first order partial differential operator $\dirac$ \Rd given by \Bk
$$
\dirac F= e_1(\pd_1 F)+ e_2(\pd_2 F)+ e_3(\pd_3 F),
$$
that is the nabla symbol acting via the Clifford product.
\end{defn}

\begin{ex}
The relevance of Clifford algebra to this paper is as follows.

(M)
In the $\wedge \C^3$ framework, the magnetic field is a bivector field $*H$,  
and we combine the electric and magnetic fields in one multivector field
\begin{equation}  \label{eq:EMfield3D}
  F= E+*H=E_1 e_1+E_2e_2+E_3e_3+ H_1e_2\wedg e_3+ H_2e_3\wedg e_1+H_3e_1\wedg e_2,
\end{equation} 
which has zero scalar and $3$-vector parts.
In this way, Maxwell's equations \eqref{eq:maxwell} can be written in 
a compact way as
\begin{equation}   \label{eq:diraceq}
  \dirac F= ikF.
\end{equation}
Indeed, by \eqref{eq:Cliffprod} we see that
\begin{multline*}
  \dirac F= \nabla (E+*H)= \nabla\cdot E+(0-\nabla\times H) \\+ *(\nabla\times E+0)+ *(\nabla\cdot H)
  = ik(0+E+*H+*0).
\end{multline*}

(H)
In the $\wedge \C^3$ framework, we combine the acoustic wave function $u$
and its gradient, into one multivector field
\begin{equation*}  
  F= iku+\nabla u=iku+(\partial_1)u e_1+(\partial_2u)e_2+(\partial_3u)e_3,
\end{equation*} 
which has zero bivector and $3$-vector parts.
In this way, Helmholtz' equation for $u$ is equivalent to \eqref{eq:diraceq}
for $F$.
Indeed, by \eqref{eq:Cliffprod} we see that
$$
  \dirac F= \nabla (iku+\nabla u)= \nabla\cdot \nabla u+ik\nabla u  + *0+ *0
  = ik(iku+\nabla u+*0+*0).
$$
For the Neumann problem, we shall instead use that the Helmholtz equation for $u$
is equivalent to \eqref{eq:diraceq} for $F=*\nabla u+ ik{*u}$.
\end{ex}

The fundamental solution $1/z$ to the Cauchy--Riemann $\conj \partial$ operator, yields 
the Cauchy reproducing formula for analytic functions in the complex plane.
Generalizing this to three dimensions and $k\ne 0$, one obtains a Cauchy type 
reproducing formula for solutions to $\dirac F= ikF$.
The Dirac operator is the famous differential square root of the Laplace operator: 
$\dirac^2 F= \lap F$ acting component-wise on the multivector field, and therefore
$$
  (\dirac +ik)((\dirac-ik)\Phi_k)(x)= (\lap+k^2)\Phi_k(x)= \delta_0(x),
$$
from which we obtain a fundamental solution 
$$
  \Psi_k(x)= (\dirac-ik)\Phi_k(x)= \Big( - \frac{x}{|x|^2}+ik\big( \tfrac x{|x|}-1\big)\Big)\Phi_k(x)
$$ 
to $\dirac +ik$.
Stokes' theorem shows that
\begin{multline*}
  \int_{\partial\Omega} \Psi_k(y-x) (-n(y))F(y) dy \\
  = \int_\Omega \big( (\dirac_y\Psi_k(y-x)) F(y)+ \Psi_k(y-x) (\dirac F(y)) \big) dy \\
  = F(x)+ \int_\Omega \big( -ik\Psi_k(y-x) F(y)+ \Psi_k(y-x) ik F(y) \big) dy 
  =F(x),
\end{multline*}
when $x\in\Omega$, if $\dirac F= ikF$ in $\Omega$ and if  the Dirac radiation condition
 \begin{equation}   \label{eq:diracrad}
    (\tfrac x{|x|}-1)F(x)= o(|x|^{-1}e^{\im k |x|}),\qquad\text{as }  x\to \infty,
 \end{equation}
  holds. For the electromagnetic field \eqref{eq:EMfield3D} this is exactly the Silver--M\"uller radiation condition,
as seen from \eqref{eq:Cliffprod}.

\begin{lem}   \label{lem:radiationsuffice}
  Assume that $\dirac F=ikF$ in $\Omega$ and that $F$ satisfies the 
  Dirac radiation condition \eqref{eq:diracrad}.
  Then
  $$
  \lim_{R\to\infty}\int_{|x|=R} \Psi_k(y-x) \tfrac {y}{|y|}F(y) dy=0.
  $$
\end{lem}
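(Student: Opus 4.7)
The plan is to exploit the Clifford identity $\hat y\,\hat y = 1$ for the unit vector $\hat y := y/|y|$ to rewrite multiplication by $\hat y$ so that the Dirac radiation condition applies directly to the dominant part of the integrand. Set $\hat r := (y-x)/|y-x|$, so
$$\Psi_k(y-x) = \Bigl(-\frac{\hat r}{|y-x|}+ik(\hat r-1)\Bigr)\Phi_k(y-x),$$
and use the algebraic identity $\hat r\,\hat y = 1+(\hat r-\hat y)\,\hat y$ (an immediate consequence of $\hat y\,\hat y=1$) to decompose
\begin{multline*}
  \Psi_k(y-x)\,\hat y\,F(y) = -ik\Phi_k(y-x)(\hat y-1)F(y) \\
   {}+ik\Phi_k(y-x)(\hat r-\hat y)\,\hat y\,F(y) -\frac{\hat r\,\hat y}{|y-x|}\Phi_k(y-x)F(y),
\end{multline*}
where the scalars $ik$ and $\Phi_k(y-x)$ are moved freely through the Clifford products.

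For $|y|=R$ with $R\gg|x|$, routine estimates yield $|\Phi_k(y-x)|=O(R^{-1}e^{-\im k R})$, $|\hat r-\hat y|=O(R^{-1})$, and $|y-x|^{-1}=O(R^{-1})$. The first term in the decomposition is the crucial one: combining the kernel bound with the Dirac radiation condition $(\hat y-1)F(y)=o(R^{-1}e^{\im k R})$ gives an $o(R^{-2})$ integrand, which integrates over the $4\pi R^2$-sphere to give $o(1)$ as $R\to\infty$.

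The two remaining terms are of pointwise size $O(R^{-2}e^{-\im k R})\,|F(y)|$, so their contribution to the integral is
\begin{equation*}
  O(1)\cdot e^{-\im k R}\cdot R^{-2}\int_{|y|=R}|F(y)|\,dy.
\end{equation*}
The main obstacle is that, to conclude this is $o(1)$, one needs $F$ itself (not just $(\hat y-1)F$) to decay at the Silver--M\"uller rate $|F(y)|=O(R^{-1}e^{\im k R})$. I would derive this as a standard consequence of the PDE: $\dirac F = ikF$ forces $(\lap+k^2)F_j=0$ componentwise, and the Dirac radiation condition implies a Sommerfeld-type radiation condition for each $F_j$ (essentially because $\dirac F=ikF$ allows one to trade the pointwise condition $(\hat y-1)F = o(\cdots)$ for the derivative condition $(\partial_r-ik)F_j = o(\cdots)$). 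The classical Helmholtz far-field asymptotics then deliver the pointwise bound, and the remaining sphere integrals become $O(R^{-1})$, finishing the proof.
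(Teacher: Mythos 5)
Your decomposition of $\Psi_k(y-x)\,\tfrac{y}{|y|}\,F(y)$ and your treatment of the main term are fine, and coincide with how the paper uses the hypothesis for the $ik(\tfrac{x}{|x|}-1)$ part of $\Psi_k$. The gap is in the last step, where you control the remainder terms: you assert that the Dirac radiation condition can be ``traded'', via $\dirac F=ikF$, for a Sommerfeld condition on each Cartesian component $F_j$, and that classical far-field asymptotics then give the pointwise bound $|F(y)|=O(R^{-1}e^{\im k R})$. This trade is not a pointwise algebraic consequence of the equation: the analogous classical statement (Silver--M\"uller implies componentwise Sommerfeld, and the $e^{ik|x|}/|x|$ far-field expansion) is proved through an exterior Stratton--Chu/Green representation, and to justify such a representation one must first show that the contribution of large spheres vanishes --- which is essentially the lemma you are trying to prove, or at the very least requires an a priori bound on $\int_{|x|=R}|F|^2\,dx$. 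As stated, your argument is circular; moreover the classical results you invoke are formulated for real $k$ and would need separate justification for $\im k>0$.

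The paper closes exactly this gap with a short direct estimate, and only an $L_2$-average bound on spheres is needed, not pointwise decay. One integrates the identity $2|F|^2=|(\tfrac{x}{|x|}-1)F|^2+2(\tfrac{x}{|x|}F,F)$ over $|x|=R$ and applies Stokes' theorem to the last term over the shell $R_0<|x|<R$, using $(\dirac F,F)+(F,\dirac F)=-2\im k\,|F|^2$ and $\im k\ge 0$; this yields $\int_{|x|=R}|F|^2\,dx\le Ce^{2\im k R}$ with no further input. With Cauchy--Schwarz, $\int_{|y|=R}|F|\,dy\le CRe^{\im k R}$, so your two remainder terms, of kernel size $O(R^{-2}e^{-\im k R})$, contribute $O(R^{-1})\to 0$. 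If you replace your appeal to far-field asymptotics by this Stokes/Rellich-type sphere estimate, your proof becomes correct and is then essentially the paper's argument.
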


\begin{proof}
   Note that left multiplication by $\tfrac 12(1-\tfrac x{|x|})$ is a projection in $\wedge\C^3$,
   so the condition is on one half of the multivector field $F$. 
      To obtain some control of the other half of $F$, we integrate the identity 
   $$
     2|F|^2= |(\tfrac x{|x|}-1)F|^2+ 2(\tfrac x{|x|}F,F),
   $$
   over the a large sphere $|x|=R$ and use Stokes' theorem on the last term to obtain
   \begin{multline*}
     2\int_{|x|=R} |F|^2 dx= \int_{|x|=R} |(\tfrac x{|x|}-1)F|^2 dx +2 \int_{|x|=R_0} (\tfrac x{|x|}F,F) dx \\
     -4\im k\int_{R_0<|x|<R} |F|^2 dx\le Ce^{2\im k R },
   \end{multline*}
   since $(\dirac F,F)+ (F,\dirac F)= -2\im k|F|^2$ and $\im k\ge 0$.
   The stated limit now follows by using this estimate for the term $-x/|x|^2$, and the hypothesis for the
   term $ik(x/|x|-1)$, in $\Psi_k$.
\end{proof}

We define the trace
of Cauchy extensions
$$
   C^-_k h(x)=\lim_{z\in\Omega, z\to x} C^\Omega_k h(z),  \qquad x\in \partial\Omega,
$$
where $C^\Omega_k h(z)= \int_{\partial\Omega} \Psi_k(y-z) (-n(y))h(y) dy$, $z\in\Omega$, and
$h:\partial\Omega\to \wedge\C^3$.
Similarly for the complementary bounded domain $\Omega^+$, we set 
$$
   C^+_k h(x)=\lim_{z\in\Omega^+, z\to x} C^{\Omega^+}_k h(z),  \qquad x\in \partial\Omega,
$$
where $C^{\Omega^+}_k h(z)= \int_{\partial\Omega} \Psi_k(y-z) n(y)h(y) dy$, $z\in\Omega^+$. 
The two operators $C^\pm_k$ act on functions $h:\partial \Omega\to \wedge\C^3$, and we observe that
$$
  (C^-_k)^2= C^-_k, \qquad (C^+_k)^2= C^+_k,\qquad C^+_kC^-_k=0= C^-_kC^+_k,
$$
as a consequence of Stokes theorem as above.
Moreover, we compute the limits 
$$
  C^\pm_k h= \tfrac 12(h \pm C_k h),
$$
where $C_k$ denotes the principal value singular integral
$$
   C_k h(x)=2 \pv \int_{\partial\Omega} \Psi_k(y-x) n(y)h(y) dy, \qquad x\in\partial\Omega.
$$
In particular, $C^+_k+C^-_k= I$ and we conclude that $C^\pm_k$ are two complementary 
projections, and that $C_k= C_k^+-C_k^-$ is a reflection operator, meaning that $C_k^2= I$.

\section{Boundary conditions and well posedness}   \label{sec:bvpss}

By $\mH$ we denote a suitable Banach space of functions $h:\partial\Omega\to \wedge\C^3=(\C^{2\times 2})^2$.
Throughout the remainder of this paper we shall use
$$
  \mH= L_2(\partial\Omega, \wedge\C^3).
$$
However, for smooth surfaces we could equally well use an $L_p$, $1<p<\infty$, or H\"older space $C^{\alpha}$, $0<\alpha<1$, for example.
What is needed is that the Cauchy singular integral operator $C_k$, as well as the multiplication operators being used, act boundedly in $\mH$, and that Fredholm well posedness holds for the $N$ and $S$ boundary value problems, as discussed below. It is for the latter  reason that we prefer $L_2$, since well posedness holds here for all strongly Lipschitz surfaces.

As we saw in Section~\ref{sec:Cauchyint}, the Dirac equation $\dirac F= ik$ in the two domains $\Omega^+$ and $\Omega= \Omega^-$ induce a topological (but non-orthogonal) splitting of the function space $\mH$ on $\partial\Omega$ into closed
 Hardy subspaces
 $$
    \mH= C_k^+\mH\oplus C_k^-\mH.
 $$ 
 Below we make use of the formalism from \cite{Ax1} for boundary value problems, which means
 that we encode the boundary conditions in a second reflection operator $A$.
 Well posedness of all the associated boundary value problems means that $\pm 1$ do not belong 
 to the spectrum of the cosine operator 
 $$
 \tfrac 12(C_k A+AC_k)= A^+C_kA^+-A^-C_kA^-.
 $$
 An optimal boundary value problem is when $C_kA+AC_k=0$, or Fredholm optimal when 
 this cosine operator is compact.

\subsection{The $N$ boundary conditions}    \label{sec:Nbvp}

We first survey known estimates for normal and tangential boundary conditions.
See \cite{Ax2} for more details. In this case we choose $A$ to be the operator
$$
  (Nf)(x)= n(x)\inv{f(x)}n(x), \qquad x\in \partial\Omega.
$$
The associated projection operators $N^\pm$ are 
$$
N^+f= \tfrac 12(f+Nf)= n(n\wedg f),
$$
which
gives the tangential (to the tangent plane $T_y\partial \Omega$) part of $f$, and $N^-f= \tfrac 12(f-Nf)$, which gives the normal part of $f$. 

When $\partial \Omega$ is smooth, we see that the cosine operator $ \tfrac 12(C_k N+NC_k)$ is compact.
Indeed the kernel of this integral operator is only weakly singular, since $\Psi_k(x-y)$ is almost
tangential, and therefore nearly anticommutes with $n(x)\approx n(y)$, when $x\approx y$.
In the general case when $\partial\Omega$ is merely Lipschitz regular, the cosine operator
fails to be compact, but it is still possible to prove that its essential spectrum does not contain $\pm 1$, by using 
Rellich type identities.
In our setting of the Dirac equation, one considers the sesquilinear form
$$
  \mH\times\mH\to\C: f,g\mapsto \Theta(f,g)= \int_{\partial\Omega}(n(x)\inv{f(x)}, g(x)\theta(x)) dx,
$$
where $\theta\in C^\infty_0(\R^3;\R^3)$ is a smooth vector field chosen so that
$(\theta(x),n(x))\ge C>0$ on $\partial\Omega$. 
On the one hand, by Stokes theorem, $\Theta(f,f)$ is essentially zero when $f$ is in one of the Hardy subspaces
$C_k^+\mH$ and $C_k^-\mH$. On the other hand, $\Theta$ is positive definite on $N^+\mH$ and
negative definite on $N^-\mH$. 
As in \cite[Thm. 3.4]{Ax2}, specializing to $V=N$, one deduce from this that $\lambda I-\tfrac 12(C_kN+ NC_k)$ is a Fredholm
operator with index zero for all $\lambda$ in a hyperbolic neighbourhood of $[1,\infty)$ and $(-\infty,-1]$.

The two reflection operators $C_k$ and $N$ describe four boundary value problems,
 where we seek solutions to $\dirac F=ikF$ in $\Omega^+$ or $\Omega^-$, with prescribed 
 tangential part $N^+(F|_{\partial\Omega})$ or normal part $N^-(F|_{\partial\Omega})$ of the multivector
 field at the boundary.
 Well posedness of such a problem is equivalent to the invertibility of a restricted projection,
as follows.
 
 \begin{thm}   \label{thm:Nwp}
   The four restricted projections
$$
  N^+: C_k^-\mH\to N^+\mH,
$$
$N^-: C_k^-\mH\to N^-\mH$, $N^+: C_k^+\mH\to N^+\mH$
and $N^-: C_k^+\mH\to N^-\mH$
are Fredholm operators with index zero for any $k\in\C$.
They fail to be invertible only on a discrete set of resonances in $\im k\le 0$.
However, the two restricted projections $N^\pm: C_k^-\mH\to N^\pm\mH$ corresponding to the exterior domain $\Omega$, when this is connected, will also be invertible for all $k\in\R\setminus\{0\}$.
\end{thm}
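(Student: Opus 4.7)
The plan is to follow the abstract boundary value problem framework of \cite{Ax1} recalled just before the theorem, which reduces all four invertibility/Fredholm questions to a single spectral statement for the cosine operator
\[
  T_k := \tfrac{1}{2}(C_k N + N C_k) \;=\; N^+ C_k N^+ - N^- C_k N^-
\]
on $\mH$. Concretely, the four restricted projections in the statement correspond to the four topological splittings $\mH = C_k^\pm\mH \oplus N^\mp\mH$, and each is invertible (respectively Fredholm of index zero) precisely when one of $\pm 1$ lies outside the spectrum (respectively essential spectrum) of $T_k$. I would then invoke the Rellich-type argument outlined in the excerpt and rigorously developed in \cite[Thm.~3.4]{Ax2}: Stokes' theorem applied to the sesquilinear form $\Theta$, combined with $\dirac F = ikF$, shows $|\Theta(f,f)|$ is controlled by $|\im k|\cdot\|F\|^2_{L_2(\Omega^\pm)}$ when $f = F|_{\partial\Omega}\in C_k^\pm\mH$ is the boundary trace of a Dirac solution, while the pointwise lower bound $(\theta,n)\ge c>0$ makes $\pm\re\Theta$ coercive on $N^\pm\mH$. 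The opposing sign behaviours force $\lambda I - T_k$ to be Fredholm of index zero for every $\lambda$ in a hyperbolic neighbourhood of $(-\infty,-1]\cup[1,\infty)$, and specialising to $\lambda=\pm 1$ gives the first assertion.

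The second step is discreteness of the resonance set. Since $\Psi_k$ is holomorphic in $k\in\C\setminus\{0\}$, so is $T_k$, and the analytic Fredholm theorem reduces the task to exhibiting invertibility at a single $k$. I would do this for every $k$ with $\im k>0$: given $h$ in the kernel of any of the four restricted projections, the associated Cauchy extension $F$ solves $\dirac F = ikF$ in $\Omega^\pm$, has $F|_{\partial\Omega}$ purely tangential or purely normal, and in the exterior case satisfies the Dirac radiation condition \eqref{eq:diracrad}. The same Rellich identity from Step~1, now read with the strict inequality $\im k>0$, couples a bulk term of one strict sign to a boundary term of the opposite strict sign (via the definiteness of $\Theta$ on $N^\pm\mH$), and hence forces $F\equiv 0$; the sphere contribution at $|x|=R$ is absorbed via Lemma~\ref{lem:radiationsuffice}. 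So resonances form a discrete subset of $\{\im k\le 0\}$.

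The third step handles the exterior projections $N^\pm\colon C_k^-\mH\to N^\pm\mH$ at real $k\ne 0$. Given $h$ in the kernel, set $F := C_k^\Omega h$, so $\dirac F = ikF$ in $\Omega$, $F$ satisfies the Dirac radiation condition, and $F|_{\partial\Omega}\in N^\mp\mH$ is purely tangential or purely normal. With $\im k=0$ the bulk contribution in the Stokes identity on $\Omega\cap\{|x|<R\}$ vanishes. A direct algebraic computation in $\wedge\C^3$ (using that for $n$ a unit vector, left-multiplication by $n$ interchanges the graded subspaces making up $N^+\mH$ with those making up $N^-\mH$) shows that the boundary pairing on $\partial\Omega$ also vanishes identically for $F|_{\partial\Omega}\in N^\pm\mH$. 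Hence the sphere integral $\int_{|x|=R}(\tfrac{x}{|x|}F,F)\,dS$ must vanish for all large $R$. Combined with the identity
\[
  2|F|^2 \;=\; \bigl|\bigl(\tfrac{x}{|x|}-1\bigr)F\bigr|^2 + 2\bigl(\tfrac{x}{|x|}F,F\bigr)
\]
from the proof of Lemma~\ref{lem:radiationsuffice} and the Dirac radiation condition, this yields $\int_{|x|=R}|F|^2\,dS\to 0$. A classical Rellich--Sommerfeld lemma applied componentwise (each scalar entry of $F$ solves $(\Delta+k^2)F_j=0$), followed by unique continuation, propagates the vanishing to all of $\Omega$; here the connectedness of $\Omega$ is essential. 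Therefore $h = F|^-_{\partial\Omega} = 0$, and Step~1's Fredholm index zero upgrades injectivity to invertibility.

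The main obstacle is the Rellich coercivity estimate for $T_k$ on merely Lipschitz surfaces underlying Step~1, which is precisely the technical content of \cite[Thm.~3.4]{Ax2}; the boundedness of $C_k$ on $L_2(\partial\Omega)$ for Lipschitz $\partial\Omega$ enters at this point. The exterior uniqueness argument of Step~3 is the Dirac-equation analogue of the classical exterior Helmholtz uniqueness proof, and should go through without serious difficulty once the Stokes identity is in place.
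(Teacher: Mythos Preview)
Your proposal is correct and follows essentially the same route as the paper. The paper simply cites \cite[Thm.~3.5]{Ax2} for the Fredholm and discreteness statements (your Steps~1--2), while your Step~3 argument---Stokes on $\Omega\cap\{|x|<R\}$, vanishing of the $\partial\Omega$ pairing $(nF,F)$ for $F|_{\partial\Omega}\in N^\pm\mH$, the pointwise identity $2|F|^2=|(\tfrac{x}{|x|}-1)F|^2+2(\tfrac{x}{|x|}F,F)$, and then Rellich's lemma applied componentwise to the Helmholtz equation---is exactly the paper's proof, only organised in a slightly different order. One minor remark: in Step~2 you invoke the $\Theta$-Rellich form to get injectivity for $\im k>0$, but the simpler pairing $(nF,F)$ you already use in Step~3 would suffice there as well, since it vanishes on $N^\pm\mH$ and the bulk term $-2\im k\|F\|^2$ has a strict sign.
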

 
\begin{proof}
  All the statements, except the invertibility of the exterior restricted projections for $k\in \R\setminus\{0\}$,
  follow from the Rellich identities, Fredholm perturbation theory and analytic Fredholm theory.
  See \cite[Thm. 3.5]{Ax2}, and specialize to $V=N$.

  To prove the last statement, assume that $F$ satisfies  the Dirac equation $\dirac F= ikF$ in $\Omega$, the radiation 
  condition   $(\tfrac x{|x|}-1)F(x)= o(|x|^{-1})$ as $x\to \infty$, and 
  that either $N^+ F|_{\partial\Omega}= 0$ or $N^- F|_{\partial\Omega}= 0$ on $\partial \Omega$.
  It suffices to prove that $F=0$ identically in $\Omega$, provided $k\in\R\setminus\{0\}$ and $\Omega$ is connected.
  Note that each component function of $F$ will satisfy the scalar Helmholtz equation since
   $$
      (\lap+ k^2)F= (\dirac+ik)(\dirac -ik)F= (\dirac+ik)0=0.
   $$
   In particular it is a real analytic function in $\Omega$, and by Rellich's lemma,
   see for example~\cite[Ch. 9, Lem. 1.2]{Tay2},
   it suffices to show $\lim_{R\to\infty}\int_{|x|=R}|F|^2 dx =0$.
   To this end, calculate as in the proof of Lemma~\ref{lem:radiationsuffice}, but use Stokes' theorem
   on all $\Omega\cap \{ |x|<R\}$, to get
   $$
          2\int_{|x|=R} |F|^2 dx= \int_{|x|=R} |(\tfrac x{|x|}-1)F|^2 dx +2 \int_{\partial\Omega} (nF,F) dx.
   $$
   By hypothesis, the first term tends to zero, and the last term vanishes because of the assumed 
   boundary conditions.
\end{proof}

\subsection{The $S$ boundary conditions}

We now study two auxiliary boundary conditions described by the reflection operator
$$
  (Sf)(x)= n(x)f(x), \qquad x\in\partial\Omega.
$$
The associated projection operators $S^\pm$ are 
$$
S^-f= \tfrac 12(f-Sf)= \tfrac 12(1-n)f,
$$
and $S^+f= \tfrac 12(f+Sf)= \tfrac 12(1+n)f$.
In this case 
$$
C_k Sf(x)= 2 \pv \int_{\partial\Omega} \Psi_k(y-x) f(y) dy, \qquad x\in\partial\Omega,
$$
is a compact perturbation of a skew-adjoint operator, 
and therefore so is the cosine operator $\tfrac 12(C_kS+SC_k)$.
When $\partial\Omega$ is smooth, one moreover sees from the almost orthogonality of $\Psi_k(x-y)$ and 
$n(x)\approx n(y)$, when $x\approx y$, that the cosine operator is a weakly singular, and hence compact, operator.
Thus, for any Lipschitz surface, $\pm 1$ will not be in the essential spectrum of this cosine operator.

The two reflection operators $C_k$ and $S$ describe four boundary value problems,
 where we seek solutions to $\dirac F=ikF$ in $\Omega^+$ or $\Omega^-$, with prescribed 
  $S^+(F|_{\partial\Omega})$ part or $S^-(F|_{\partial\Omega})$ part of the multivector
 field at the boundary.
 Well posedness of such a problem is equivalent to the invertibility of a restricted projection,
as follows.

 \begin{thm}   \label{thm:Swp}
   The four restricted projections
$$
  S^-: C_k^+\mH\to S^-\mH,
$$
$S^+: C_k^-\mH\to S^+\mH$, $S^+: C_k^+\mH\to S^+\mH$
and $S^-: C_k^-\mH\to S^-\mH$
are Fredholm operators with index zero for any $k\in\C$.
They fail to be invertible only on a discrete set of resonances $k$.
The restricted projection $S^-: C_k^+\mH\to S^-\mH$ is invertible when $\im k\ge 0$.
\end{thm}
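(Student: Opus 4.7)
The plan follows the abstract reflection-operator framework of \cite{Ax1, Ax2} applied to the pair $(C_k, S)$, exactly parallel to the treatment of $(C_k, N)$ in Theorem~\ref{thm:Nwp}. The four restricted projections are simultaneously Fredholm of index zero precisely when $\pm 1$ lies outside the essential spectrum of the cosine operator $\tfrac{1}{2}(C_k S + S C_k)$. The discussion immediately preceding the statement records that $C_k S$ is a compact perturbation of a skew-adjoint operator, so its symmetrization has essential spectrum on $i\R$, and in particular avoids $\pm 1$. This gives the Fredholm index zero claim on any Lipschitz surface.

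For the discreteness of resonances I would invoke the analytic Fredholm theorem. The kernel $\Psi_k$ depends holomorphically on $k \in \C$, so $k \mapsto C_k$ is a holomorphic family of bounded operators on $\mH$, while $S$ is $k$-independent. Once invertibility is known at a single $k$, the non-invertibility set is a discrete subset of $\C$. This single point will be supplied by the injectivity argument below (for any $k$ with $\im k > 0$).

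The substantive part is injectivity, and hence by index zero invertibility, of $S^-: C_k^+\mH \to S^-\mH$ for $\im k \ge 0$. Suppose $f \in C_k^+\mH$ satisfies $S^-f = 0$, and let $F$ be its Cauchy extension, so $\dirac F = ikF$ in $\Omega^+$ with $F|_{\partial\Omega}=f$ and $(1-n)f = 0$. A direct check on the standard basis shows that Clifford left multiplication by the real unit vector $n$ is a self-adjoint isometry of $\wedge\C^3$, so $(nF,F)$ is real and
\begin{equation*}
2|F|^2 = |(1-n)F|^2 + 2(nF,F).
\end{equation*}
Integrating over $\partial\Omega$ and using the boundary hypothesis to kill the first term gives $\int_{\partial\Omega}|F|^2 dx = \int_{\partial\Omega}(nF,F) dx$. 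Since $\Omega^+$ is bounded with outward unit normal $n$, Stokes' theorem combined with $\dirac F = ikF$ yields
\begin{equation*}
\int_{\partial\Omega}(nF, F)\, dx = \int_{\Omega^+}\bigl[(\dirac F,F) + (F,\dirac F)\bigr]\, dx = -2\im k\int_{\Omega^+}|F|^2\, dx.
\end{equation*}
Thus $\int_{\partial\Omega}|F|^2 dx = -2\im k\int_{\Omega^+}|F|^2 dx$. For $\im k \ge 0$ the two sides have opposite signs, so both vanish, whence $F|_{\partial\Omega}=0$, and the Cauchy reproducing formula $F = C^{\Omega^+}_k(F|_{\partial\Omega})$ forces $F \equiv 0$ in $\Omega^+$, hence $f=0$.

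The hard point is the signed identity in the last step. The parallel with the $N$-case is not merely routine: there one uses an auxiliary positive-definite sesquilinear form $\Theta$ and a hyperbolic-neighbourhood argument, whereas here the pointwise form $(nF,F)$ is itself real, by self-adjointness of $L_n$, and has a sign dictated by $\im k$. This cleaner sign structure is precisely what rules out any resonances in the entire closed upper half-plane, rather than only off a discrete set; the Fredholm and analytic-Fredholm pieces then transcribe from the $N$-case with essentially no new work.
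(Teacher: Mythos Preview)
Your proof is correct and follows essentially the same route as the paper: the Fredholm index-zero claim via the skew-adjointness (modulo compacts) of the cosine operator and analytic Fredholm theory, and the injectivity of $S^-:C_k^+\mH\to S^-\mH$ via the Stokes identity $\int_{\partial\Omega}(nF,F)\,dy=-2\im k\,\|F\|_{L_2(\Omega^+)}^2$ combined with the boundary constraint $nF=F$. The only cosmetic difference is that you pass through the pointwise identity $2|F|^2=|(1-n)F|^2+2(nF,F)$ to reach $\int_{\partial\Omega}|F|^2=\int_{\partial\Omega}(nF,F)$, whereas the paper simply observes that $nF=F$ gives $(nF,F)=|F|^2$ directly; these are the same computation.
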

 
\begin{proof}
  All the statements, except the invertibility results, follow from the skew-adjointness, modulo compact
  operators,
  Fredholm perturbation theory and analytic Fredholm theory, similar to
  the proof of \cite[Thm. 3.5]{Ax2}, replacing $V$ there by $S$.

   To prove injectivity for $S^-: C_k^+\mH\to S^-\mH$, we apply Stokes' theorem to get
$$
  \int_\Omega \Big( (\dirac F, F)+ (F,\dirac F)\Big) dx= \int_{\partial \Omega} (nF,F) dy.
$$
If $\dirac F= ikF$ and $F|_{\partial\Omega}\in S^+\mH$, and hence $nF=F$, then
we obtain
$-2\im k\|F\|^2_{L_2(\Omega)}= \|F\|^2_{L_2(\partial\Omega)}$.
If $\im k>0$, clearly $F=0$ identically in $\Omega$.
If $\im k=0$, then $F|_{\partial\Omega}=0$. Using the Cauchy reproducing formula, it follows
that $F=0$ in all $\Omega$.
This proves the injectivity of the restricted projection.
\end{proof}

\subsection{Mixed $N$ and $S$ boundary conditions}   \label{sec:NSspin}

We consider the following Dirac scattering problem.
Assume given a tangential multivector field
$$
   g= \alpha+ a+*b+*\beta\in N^+\mH
$$
on $\partial\Omega$. This means that $\alpha$ is arbitrary, $a$ is a tangential
vector field, $b$ is a normal vector field, and $\beta=0$.
We seek a multivector field $F:\Omega\to \wedge\C^3$ solving $\dirac F=ikF$ in $\Omega$,
with boundary condition
$$
  N^+(F|_{\partial\Omega})=g,
$$
and satisfying the radiation condition
$(\tfrac x{|x|}-1)F(x)= o(|x|^{-1}e^{\im k |x|})$
as $x\to \infty$.
Clearly, well posedness of this boundary value problem means exactly
that the restricted projection
$$
  N^+: C_k^-\mH\to N^+\mH
$$
is invertible.
We know that this is true when $\im k\ge 0$ and $k\ne 0$ by Theorem~\ref{thm:Nwp}.
The main idea in this paper is to combine this restricted projection with the auxiliary 
restricted projection $S^-: C_k^+\mH\to S^-\mH$, which is invertible for $\im k\ge 0$ by Theorem~\ref{thm:Swp}, 
to obtain an equivalent 
invertible integral equation on the full space $\mH$, in the following way.

\begin{thm}  \label{thm:Dbvpasspin}
  Let $g\in N^+\mH$.
  Then $f\in C_k^-\mH$ and $N^+ f=g$ if and only if 
  $f\in \mH$ solves the equation
$$
  \Big(I- (N+S+SN)C_k\Big) f= 2(I+S)g.
$$
\end{thm}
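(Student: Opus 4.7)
My plan is to translate the displayed equation into the language of the complementary projections $C_k^\pm$, $N^\pm$, $S^\pm$, so that the invertibility result of Theorem~\ref{thm:Swp} can be brought to bear. The key algebraic identity is $(I+S)(I+N) = I+N+S+SN$, which together with $I+C_k = 2C_k^+$, $I+N = 2N^+$, $I+S = 2S^+$ lets me rewrite
\begin{equation*}
I - (N+S+SN) C_k = (I+C_k) - (I+S)(I+N) C_k = 2C_k^+ - 4 S^+ N^+ C_k.
\end{equation*}
Dividing the equation by $2$, I get the equivalent working form
\begin{equation*}
C_k^+ f = 2 S^+ \bigl(N^+ C_k f + g\bigr).
\end{equation*}

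The ``if'' direction is then a single line: if $f\in C_k^-\mH$ and $N^+ f = g$, then $C_k^+ f = 0$ and $C_k f = -f$, so the right-hand side of the working form becomes $2 S^+(g - N^+ f) = 0$, matching the left-hand side.

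For the ``only if'' direction, I would argue in two stages. The working form shows $C_k^+ f\in S^+\mH$, i.e., $S^- C_k^+ f = 0$. Since $S^-:C_k^+\mH\to S^-\mH$ is injective for $\im k\ge 0$ by Theorem~\ref{thm:Swp}, this forces $C_k^+ f = 0$, hence $f \in C_k^-\mH$ and $C_k f = -f$. Substituting back collapses the working form to $S^+(g - N^+ f) = 0$, placing $N^+ f - g$ in $N^+\mH \cap S^-\mH$.

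The main obstacle is the algebraic lemma $N^+\mH \cap S^-\mH = \{0\}$, which I would prove directly from the definitions $Nf = n\inv{f}n$ and $Sf = nf$. Suppose $nf = -f$ and $n\inv{f}n = f$. The grade involution is an automorphism of the Clifford product with $\inv{n} = -n$; applying it to the first relation gives $n\inv{f} = \inv{f}$, and inserting this into the second yields $\inv{f}n = f$. Right-multiplying by $n$ and using $n^2 = 1$ gives $\inv{f} = fn$, while applying the involution to $\inv{f} n = f$ gives $fn = -\inv{f}$. Combining forces $\inv{f} = -\inv{f}$, hence $f = 0$. This closes the argument: $N^+ f = g$, as required.
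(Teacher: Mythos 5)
Your proof is correct and follows essentially the same route as the paper's: both arguments reduce the spin equation, by elementary projection algebra, to the pair $S^-C_k^+f=0$ (handled by the injectivity of $S^-:C_k^+\mH\to S^-\mH$ from Theorem~\ref{thm:Swp}) and $N^+f=g$ (extracted from $N^+\mH\cap S^-\mH=\{0\}$), the only difference being that the paper writes the operator as $\tfrac14\big((N+S)-(2I+N-S)C_k\big)$ and multiplies by the invertible factor $N+S$, whereas you factor $I-(N+S+SN)C_k=2C_k^+-4S^+N^+C_k$ directly. Your pointwise verification of $N^+\mH\cap S^-\mH=\{0\}$, which the paper merely asserts, is a correct and welcome addition.
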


\begin{proof}
  We show that both statements are equivalent to
\begin{equation*} 
  (N^+C_k^- -S^-C_k^+)f=g.
\end{equation*}
Note that $N^+\mH\cap S^-\mH=\{0\}$, so this equation is equivalent to
$N^+C_k^-f=g$ and $S^-C_k^+f=0$.
By Theorem~\ref{thm:Swp}, the latter is equivalent to $f\in C_k^-\mH$.

Calculating, we get
\begin{multline*}
 N^+C_k^- -S^-C_k^+=
  \tfrac 12(N^+(I-C_k)-S^-(I+C_k)) \\
  =\tfrac 12((N^+-S^-)-(N^++S^-)C_k) =
  \tfrac 14((N+S)-(2I+N-S)C_k).
\end{multline*}
Multiplying the equation by $N+S$, using that $N$ and $S$ are anti-commuting reflection
operators, and $Ng=g$, completes the proof.
\end{proof}

We note that the spin integral equation in Algorithm~\ref{alg:rot} \Rd coincides \Bk with the 
equation in Theorem~\ref{thm:Dbvpasspin}, after multiplication of the equation by $S$,
under the relations
$(\C^{2\times 2})^2= \wedge \C^3$, 
$h= Sf$,
$R_k= C_kS$ and
$M= I +SN+ N$.

To obtain further understanding of the spin equation from Theorem~\ref{thm:Dbvpasspin}, we
may consider the operator algebra generated by the two reflection operators $N$ and $S$,
as in \cite{Ax1}.
As used above, the cosine operator here is $\tfrac 12(NS+SN)=0$.
In particular, we have a splitting
$$
  \mH= N^-\mH\oplus S^+\mH.
$$
One calculates that the reflection operator $A$ across $S^+\mH$, along
$N^-\mH$, is the operator
$$
 A= N+S+SN,
$$
appearing in the spin equation.
Thus the associated projections are $A^\pm= \tfrac 12(I\pm (N+S+SN))$, so that
$A^+g= g+Sg$ as in the right hand side of the spin equation.
We leave these identities to the reader to verify, as we shall not use them.

\section{Maxwell and Helmholtz constraints}   \label{sec:constr}   
  
We have seen in Section~\ref{sec:NSspin} how the spin integral equation solves the Dirac scattering problem into which
the Maxwell and Helmholtz scattering problems embed. 
Note from \eqref{eq:Cliffprod}, that the Silver--M\"uller radiation condition \eqref{eq:silvermuller}
is equivalent to the Dirac radiation condition \eqref{eq:diracrad} for $F=E+*H$,
where the first and last equations in \eqref{eq:silvermuller} are redundant.
It is also clear  from \eqref{eq:Cliffprod} that the Dirac radiation condition for $F= iku+\nabla u$ implies the 
Sommerfield radiation condition \eqref{eq:sommerfield} for $u$.
Conversely, the Sommerfield radiation condition implies a Green representation.
   See \cite[Thm. 2.4]{CK}, which can be extended to $\im k\ge 0$, $k\ne 0$.
 From this the control of the angular derivative 
  $$
    \tfrac x{|x|}\times \nabla u=  o(|x|^{-1}e^{\im k |x|})
  $$
follows, and as a consequence, $F$ will satisfies the Dirac radiation condition.

It now remains to see how the constraints on the boundary
data $g$ force the Dirac solution to be a Maxwell or Helmholtz field respectively.
For this, we make use of the following operators.

\begin{itemize}
\item
The {\em exterior derivative} is the first order partial differential operator
$$
d F= e_1\wedg(\pd_1 F)+ e_2\wedg(\pd_2 F)+ e_3\wedg(\pd_3 F),
$$
that is the nabla symbol acting via the exterior product.
\item
The {\em time reflection operator} $T$ is the involution
$$
  Tf= \inv f,
$$
with associated projections $T^+ f= \tfrac 12(f+\inv f)$ onto the spacelike part
of $f$, and $T^-f= \tfrac 12(f-\inv f) $ onto the timelike part of $f$.
\end{itemize}

(M)
We first consider the Maxwell constraint, and complete the derivation of Algorithm~\ref{alg:rot}
for Maxwell scattering.
 Lemmas~\ref{lem:constraint} and \ref{lem:constraint3dhelm} below may appear somewhat cryptical.
 However, they become more transparent when 
 using a spacetime formulation of the Dirac equation, as in 
 \cite{McMi, AGHMc, Ax}.

\begin{lem}   \label{lem:constraint}
  Assume that $F= \alpha+a+*b+*\beta$ satisfies
  the Dirac equation $\dirac F=ikF$.
  Then $\alpha=\beta=0$ if and only if
  $$
     dF= ikT^+F.
  $$
\end{lem}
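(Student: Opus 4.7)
The plan is to translate both sides of the biconditional into their scalar/vector/bivector/$3$-vector component form on $\wedge\C^3$ and verify the equivalence by inspection. First I would expand the Dirac hypothesis $\dirac F = ikF$ using \eqref{eq:Cliffprod}. For $F = \alpha + a + *b + *\beta$ this gives the four component equations $\nabla \cdot a = ik\alpha$ (scalar part), $\nabla \alpha - \nabla \times b = ika$ (vector part), $\nabla \times a + \nabla \beta = ikb$ (bivector part), and $\nabla \cdot b = ik\beta$ ($3$-vector part), exactly as used in the electromagnetic example following \eqref{eq:diraceq}.

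Next I would compute both sides of the target identity. Applying \eqref{eq:extprod} to each term $e_j \wedg \partial_j F$ and summing over $j$ gives $dF = \nabla\alpha + *(\nabla\times a) + *(\nabla \cdot b)$, which has zero scalar part. From the involution formula \eqref{eq:involution} one reads off $\inv F = \alpha - a + *b - *\beta$, so $T^+F = \tfrac 12(F + \inv F) = \alpha + *b$, giving $ikT^+F = ik\alpha + ik*b$, which has zero vector and zero $3$-vector parts.

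For the forward direction, assume $\alpha = \beta = 0$. The Dirac component equations then reduce to $\nabla \cdot a = 0$, $\nabla \times b = -ika$, $\nabla \times a = ikb$, and $\nabla \cdot b = 0$, so substitution into the formula for $dF$ yields $dF = 0 + *(ikb) + 0 = ik*b = ikT^+F$. For the converse, equate $dF$ and $ikT^+F$ grade by grade: the scalar parts give $0 = ik\alpha$, hence $\alpha = 0$ since $k \neq 0$; and the $3$-vector parts give $*(\nabla \cdot b) = 0$, which combined with the Dirac $3$-vector equation $\nabla \cdot b = ik\beta$ forces $\beta = 0$. I do not expect any genuine obstacle here; once the grade decomposition is set up, the proof is a direct bookkeeping verification.
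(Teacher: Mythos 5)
Your proposal is correct and follows essentially the same route as the paper: expand $\dirac F=ikF$ via \eqref{eq:Cliffprod} and $dF=ikT^+F$ via \eqref{eq:extprod}, then compare grade by grade (the paper leaves this last verification as ``straightforward'', which you carry out explicitly, correctly invoking the standing assumption $k\neq 0$ for the converse). No gaps.
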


\begin{proof}
  By \eqref{eq:Cliffprod}, $\dirac F= ikF$ means
 $$
    \nabla\cdot a+ (\nabla\alpha- \nabla\times b)+ *(\nabla\times a+\nabla \beta)+*(\nabla\cdot b)= 
    ik(\alpha+a+*b +*\beta).
 $$
  From \eqref{eq:extprod} we see that $dF= ikT^+F$ means
 $$
    0 + \nabla \alpha+ *(\nabla\times a)+ *(\nabla \cdot b)= ik(\alpha+0+*b+*0).
 $$
 From these formulas, the claim is straightforward to verify.
\end{proof}

  Consider the incoming electromagnetic field $F^0= E^0+ *H^0$, and the tangential
  part $g$ of its trace. We solve the spin integral equation \eqref{eq:spinmatrixeq} for $h$, and obtain a solution
  $F$ to the Dirac scattering problem.
  Now define the auxiliary multivector field 
  $$
    F' = T (d-ikT^+)F. 
  $$
  By Lemma~\ref{lem:constraint}, it suffices to show $F'=0$ in $\Omega$.
  Given the unique solvability of the Dirac scattering problem, it suffices to show
  $$
  \begin{cases}
     \dirac F'=ikF', & \text{in }\Omega, \\
      N^+(F'|_{\partial\Omega})=0, & \text{on }\partial\Omega, \\
      (x/|x|-1)F'= o(|x|^{-1}e^{\im k |x|}), & \text{at } \infty.
   \end{cases}
  $$
  To verify the Dirac equation, we calculate
  \begin{multline*}
    \dirac F'= -T(\dirac dF-ik\dirac T^+ F)= -T((\dirac -d)\dirac F-ikT^- \dirac F) \\
    = -ikT((\dirac F-ikF)-(dF- ik T^+F))= 0+ikF',
  \end{multline*}
  since $\dirac d+d\dirac = \dirac^2$ and $T^++T^-=I$.
  
  To verify the boundary condition we note that $N^+(F'|_{\partial\Omega}) = \imath^* (F')$, 
  viewing $F'$ as a direct sum of differential forms, where $\imath^*$ denotes the pullback 
  by the inclusion map $\imath:\partial\Omega\to \R^3$.
  By the well known commutation of pullbacks and exterior derivatives, we obtain
  $$
    N^+(F'|_{\partial\Omega})= \imath^*(T (d-ikT^+)F)= T(d_T-ik T^+)(\imath^* F)=0,
  $$
  since $\imath^* F= E_T + *H_N= -(E^0_T + *H^0_N)$, and
  $(d_T-ik T^+)(E^0_T+*H^0_N)= d_T E^0_T-ik {*H_N^0}= *(\nabla_T\times E^0_T-ikH^0_N)=0$.
  Here $d_T$ denotes the intrinsic exterior derivative on $\partial \Omega$,
  and we have used that $d_T(*H^0_N)=0$ since $\dim\partial\Omega=2$.
  
  To verify the Dirac radiation condition, we use the fundamental solution $\Psi_k$ and the 
  radiation condition for $F$ to show that it has a representation 
  $$
  F(x)= \int_{\R^3} \Psi_k(y-x)w(y) dy,
  $$
  for some $w\in C_0^\infty(\R^3;\wedge\C^3)$.
  Indeed, if $\eta\in C_0^\infty(\R^3;\R)$ \Rd satisfies \Bk $\eta=1$ on a neighbourhood of $\clos{\Omega^+}$,
  Stokes' theorem and the radiation condition shows
  $$
    0=\int  \dirac\big(\Psi_k(y-x)  (1-\eta)F\big)dy=F(x)+\int\Psi_k(y-x)\big((\dirac -ik)((1-\eta)F\big))dy, 
  $$
  so we can choose $w= -(\dirac -ik)((1-\eta)F)$.
  
  Then we calculate modulo $o(|x|^{-1}e^{\im k |x|})$ to obtain
  $$
    F'(x)\approx ik T\int_{\R^3} (d_x-ikT^+)\Big((\tfrac{y-x}{|y-x|}-1) \Phi_k(y-x) w(y) \Big) dy,
  $$
  and further let $z=(x-y)/|x-y|\approx x/|x|$ and calculate  
 \begin{multline*} 
   (d_x-ikT^+)\big((z+1) \Phi_k(x-y) w \big) \\
   \approx ik z\wedg((z+1)\Phi_k w)-ik( z(I-T^+)w+ T^+w) \Phi_k \\
  = ik \Big( \tfrac 12( z(z+1)w+(-z+1)\inv w z )  -zw+(z-1)(T^+w)\Big) \Phi_k \\
  = ik \Big( \tfrac 12( (z-1)zw-(z-1)\inv w z )  +(z-1)(T^+w)\Big) \Phi_k,
   \end{multline*}
   using \eqref{eq:riesz}.
  Since $(z-1)T= -T(z+1)$ and $(z+1)(z-1)=0$, it follows that $F'$ satisfies the radiation condition.
  This completes the derivation of Algorithm~\ref{alg:rot} for Maxwell's equations.

(H)
We next consider the Helmholtz constraint, and complete the derivation of Algorithm~\ref{alg:rot}
for Helmholtz scattering.

\begin{lem}   \label{lem:constraint3dhelm}
  Assume that $F= \alpha+a+*b+*\beta$ satisfies
  the Dirac equation $\dirac F=ikF$.
  Then $\nabla \alpha= ika$ and $\nabla \beta= ikb$, if and only if
  $$
     dF= ikT^-F.
  $$
  In this case $F_1= \alpha+ a$ solves $\dirac F_1=ikF_1$ and 
  $F_2=  *b+ *\beta$ solves $\dirac F_2=ikF_2$.
\end{lem}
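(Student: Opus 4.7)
The plan is to mirror the componentwise strategy used in the proof of Lemma~\ref{lem:constraint}. Using \eqref{eq:extprod}, I would first compute
$$
dF = 0 + \nabla\alpha + *(\nabla\times a) + *(\nabla\cdot b),
$$
since the vector $\nabla$ acts on each grade of $F = \alpha + a + *b + *\beta$ via the exterior product (and the pseudoscalar part is annihilated in $\R^3$). Next, from the involution \eqref{eq:involution}, $T^-F = \tfrac12(F-\inv F) = a + *\beta$, so $ikT^-F$ has only a vector part $ika$ and a $3$-vector part $ik(*\beta)$. Matching grades, $dF = ikT^-F$ is equivalent to the three scalar identities
$$
\nabla\alpha = ika,\qquad \nabla\times a = 0,\qquad \nabla\cdot b = ik\beta.
$$
In parallel, \eqref{eq:Cliffprod} gives the four component equations of $\dirac F = ikF$, namely $\nabla\cdot a = ik\alpha$ (scalar), $\nabla\alpha - \nabla\times b = ika$ (vector), $\nabla\times a + \nabla\beta = ikb$ (bivector), and $\nabla\cdot b = ik\beta$ ($3$-vector).

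The equivalence is then immediate from these two lists. For the ($\Leftarrow$) direction, assuming $dF = ikT^-F$, the vector identity gives $\nabla\alpha = ika$ directly, and substituting $\nabla\times a = 0$ into the bivector part of $\dirac F = ikF$ yields $\nabla\beta = ikb$. For the ($\Rightarrow$) direction, assuming $\nabla\alpha = ika$ and $\nabla\beta = ikb$, the vector part of $\dirac F = ikF$ forces $\nabla\times b = 0$ and the bivector part forces $\nabla\times a = 0$; the remaining component of $dF = ikT^-F$, namely $\nabla\cdot b = ik\beta$, is simply the $3$-vector part of $\dirac F = ikF$.

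For the final assertion, I would expand $\dirac F_1$ using \eqref{eq:Cliffprod} with $F_1 = \alpha + a$ (i.e. $b=\beta=0$) to obtain
$$
\dirac F_1 = \nabla\cdot a + \nabla\alpha + *(\nabla\times a),
$$
and check grade-by-grade against $ikF_1 = ik\alpha + ika$: the scalar part is the scalar component of $\dirac F = ikF$, the vector part is the given $\nabla\alpha = ika$, and the bivector part vanishes because $\nabla\times a = 0$ was derived above. Similarly, applying \eqref{eq:Cliffprod} to $F_2 = *b + *\beta$ yields
$$
\dirac F_2 = -\nabla\times b + *(\nabla\beta) + *(\nabla\cdot b),
$$
whose components match $ikF_2 = ik(*b) + ik(*\beta)$ via $\nabla\times b = 0$, $\nabla\beta = ikb$, and $\nabla\cdot b = ik\beta$.

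There is no real obstacle here; the argument is bookkeeping on the eight graded components of $\wedge\C^3$. The only thing to be careful about is identifying $T^-F$ correctly (vector plus $3$-vector, i.e.\ odd grade) as opposed to the $T^+F$ used in Lemma~\ref{lem:constraint}, and tracking which components of $\dirac F = ikF$ are needed versus derived in each direction of the equivalence.
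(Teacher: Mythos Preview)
Your proposal is correct and follows exactly the same componentwise approach as the paper: both proofs expand $dF$ via \eqref{eq:extprod} and $\dirac F=ikF$ via \eqref{eq:Cliffprod}, then match grades. Your argument is in fact more explicit than the paper's, which merely writes the two expansions and declares the claim ``straightforward to verify''; in particular you spell out the two directions of the equivalence and the verification of $\dirac F_n=ikF_n$ for $n=1,2$, which the paper leaves entirely to the reader.
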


\begin{proof}
  By \eqref{eq:Cliffprod}, $\dirac F= ikF$ means
 $$
    \nabla\cdot a+ (\nabla\alpha- \nabla\times b)+ *(\nabla\times a+\nabla \beta)+*(\nabla\cdot b)= 
    ik(\alpha+a+*b +*\beta).
 $$
  From \eqref{eq:extprod} we see that $dF= ikT^-F$ means
 $$
    0 + \nabla \alpha+ *(\nabla\times a)+ *(\nabla \cdot b)= ik(0+ a+0+*\beta).
 $$
 From these formulas, the claim is straightforward to verify.
\end{proof}

Given Dirichlet and Neumann data $g_D= -(iku^0+\nabla_T u^0)$ and $g_N= -{*(\tfrac{\partial u^0}{\partial n}n)}$, we solve the spin integral equation \eqref{eq:spinmatrixeq} for $h$,
with boundary data $g_D+g_N$, and obtain a solution
  $F$ to the Dirac scattering problem.
We may of course set $g_N=0$ and solve only the Dirichlet problem, or vice versa.

To show that the Dirichlet and Neumann parts of $F$ are decoupled as in Lemma~\ref{lem:constraint3dhelm}, we consider 
the auxiliary multivector field
  $$
    F'=T(d-ikT^-)F.
  $$
  We note that $(d_T-ikT^-)(g_D+g_N)=(d_T-ikT^-)g_D=0$ at $\partial\Omega$, since $g_D= -(iku^0+\nabla_T u^0)$ and $\dim\partial\Omega=2$.
  Arguing as above for the Maxwell constraint, but swapping $T^+$ and $T^-$, 
  we verify that 
$F'$ solves the Dirac equation $\dirac F'=ikF'$, has homogeneous boundary conditions $N^+(F'|_{\partial\Omega})=0$ and satisfies the
  Dirac radiation condition $(\tfrac x{|x|}-1)F'=  o(|x|^{-1}e^{\im k |x|})$.
  
  We conclude that $F'=0$, and by Lemma~\ref{lem:constraint3dhelm}, 
  $\dirac F_n= ikF_n$, $n=1,2$, if $F_1= iku+\nabla u$ and $F_2=*\nabla v+ik{*v}$.
  Consequently 
  $$
    \Delta u +k^2 u =0= \Delta v+ k^2 v.
  $$
  This completes the derivation of Algorithm~\ref{alg:rot} for Helmholtz' equation.

\section{Relation to classical integral equations}   \label{sec:compare}

Classically, the Helmholtz scattering problem with Dirichlet data is solved using the double layer
potential operator
$$
   \dl_k u(x)= 2 \pv\int_{\partial\Omega} (\nabla\Phi_k)(y-x)\cdot n(y) u(y) dy,\qquad x\in\partial\Omega,
$$
and to avoid spurious interior resonances one modifies the integral equation $h-\dl_k h= g$ by adding a
single layer potential.
Similarly, the Neumann problem is solved using the adjoint of the double layer potential, that is the
normal derivative of the single layer potential
$$
   \dl_k^* u(x)= 2 \pv\int_{\partial\Omega} n(x)\cdot(\nabla\Phi_k)(x-y) u(y) dy,\qquad x\in\partial\Omega,
$$
and one avoids spurious interior resonances by adding a normal derivative
of the double layer potential. See Colton and Kress~\cite[Sec. 3.6]{CK0}.

Finally for the Maxwell scattering problem, one classically solves this using the magnetic dipole operator
$$
  M_k v(x)=2 \pv\int_{\partial\Omega} n(x)\times \big((\nabla\Phi_k)(x-y)\times v(y)\big) dy,\qquad x\in\partial\Omega,
$$
acting on tangential vector fields $v$,
and to avoid spurious interior resonances one can add a suitable electric dipole field.
See \cite[Thm. 6.18]{CK}.

In this section, we compare our spin integral equation to these classical integral equations.
It is not clear to the author exactly how the equations which avoid spurious interior resonances compare, in our case
the spin integral equation, and in the classical case the various combined field integral equations.
Instead we limit ourselves to comparing the simpler formulations which suffer from spurious interior resonances: In the classical 
case the operators $\dl_k$, $\dl_k^*$ and $M_k$, and in our case the rotation operator $C_kN$ as discussed below.
To see the relation, we return to Section~\ref{sec:Nbvp} and consider the more straightforward version 
$$
  (N^+C_k^--N^-C_k^+) \Rd h \Bk=g
$$
of the
spin integral equation, replacing $S^-$ by $N^-$, which we rewrite
$$
  (I-C_kN)(N \Rd h \Bk)=2g.
$$
This operator $C_kN$ was called the rotation operator in \cite{Ax1}.
It is seen that for $g\in N^+\mH$, this equation is always solvable, but at interior resonances 
$k\in\R$, $h$ may be unique only modulo a finite dimensional subspace of $\mH$.
However, any such solution $h$ will yield the same solution $F$ to the Dirac scattering problem.

\Rd
The following Proposition~\ref{prop:comprdlp} shows how the three classical integral operators above correspond to the compression $N^+C_k N^+$ of $C_k$ to
the subspace $N^+\mH$.
Note in particular that $I-N^+C_kN^+$ is invertible if and only if $I-\dl_k$, $I-M_k$ and
$I+\dl^*$ are all three invertible.

\begin{prop}   \label{prop:comprdlp}
   Write a tangential multivector field $f\in N^+ \mH$ as
   $f= u_1+(n\times v)+*(u_2n)+*0$, where $u_1, u_2$ are scalar functions and $v$ is a tangential vector field.
   Then in this splitting we have
   $$
     N^+C_kN^+=
     \begin{bmatrix}
        \dl_k & 0 & 0 & 0  \\
        \dl_k' & M_k & 0 &  0\\
        \dl_k'' & M_k' & -\dl_k^* & 0 \\
        0 & 0 & 0 & 0,
     \end{bmatrix}
   $$
   where $\dl_k$ and $M_k$ are the double layer potential operator and the magnetic dipole
   operator above, and
   \begin{align*} 
     \dl_k' u_1(x)&= 2ik   \int_{\partial\Omega} \Phi_k(y-x) n(x)\times n(y) u_1(y) dy, \\
     \dl_k'' u_1(x)&= 2\int_{\partial\Omega}  n(x)\cdot (\nabla \Phi_k(y-x)\times n(y)) u_1(y) dy,\\
     M_k' v(x)&= -2ik\int_{\partial\Omega} \Phi_k(y-x) n(x)\cdot v(y) dy.
   \end{align*}
\end{prop}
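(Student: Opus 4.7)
The plan is to expand $N^+C_kN^+f$ for $f$ running over the three pure-grade building blocks of $N^+\mH$ and read off the matrix entries directly. Since $N^+$ is a projection, $C_kN^+f = C_kf$ when $f\in N^+\mH$, and so the task reduces to computing $N^+C_kf$ for the three separate choices $f = u_1$, $f = n\times v$ (with $v$ tangential), and $f = *(u_2 n)$. Throughout, I split $\Psi_k(y-x) = p(y-x) + q(y-x)$ into its vector part $p = \nabla\Phi_k(y-x)$ and its scalar part $q = -ik\Phi_k(y-x)$.

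For each of the three choices, the first step is a short Clifford product computation of $n(y)f(y)$ using $ab = a\cdot b + *(a\times b)$ for two vectors and the general formula \eqref{eq:Cliffprod}: one finds $nu_1 = u_1 n$ (a vector), $n(n\times v) = -*v$ (a bivector, since $n\cdot(n\times v) = 0$ and $n\times(n\times v) = -v$ when $n\cdot v = 0$), and $n(*(u_2 n)) = *u_2$ (a 3-vector). The second step is to left-multiply by $\Psi_k(y-x) = p+q$ and sort the result by grade using \eqref{eq:Cliffprod} again. The third step is to apply $N^+$: the scalar part is kept unchanged, the vector part is projected onto the tangent plane at $x$, the bivector coefficient is projected onto the $n(x)$-direction, and the 3-vector is discarded.

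To match the explicit formulas for the nine nontrivial entries, two elementary identifications are then used. First, if $V_T = n(x)\times v_0$ is the tangential part of a vector field $V$ with $v_0$ also tangential, then $v_0 = -n(x)\times V(x)$ by the triple-product identity $n\times(n\times V) = (n\cdot V)n - V$. Second, for a bivector $*b$ whose coefficient $b$ is along $n(x)$, one has $u_2 = b\cdot n(x)$. With these in hand, matching each surviving grade component with the classical kernels is routine and uses only the sign convention $\nabla\Phi_k(y-x) = -(\nabla\Phi_k)(x-y)$ (needed to land on the $-\dl_k^*$ entry in the variable convention of the statement) and the cyclic symmetry $a\cdot(b\times c) = (c\times a)\cdot b$ (needed for $\dl_k''$).

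The triangular shape of the matrix is forced by grade-tracking. Left multiplication by $n$ sends our pure-grade inputs of grade $j$ to pure-grade elements of grade $j+1$, and left multiplication by $\Psi_k$ either preserves the grade (via $q$) or shifts it by $\pm 1$ (via $p$). Hence an input of grade $j$ produces output supported in grades $\{j,j+1,j+2\}$, never below grade $j$, and this combined with $N^+$ annihilating any 3-vector part gives the lower-triangular pattern and the trivial fourth row and column. The main obstacle I anticipate is purely combinatorial sign-tracking in the bivector output row, where both the vector part $p$ and the scalar part $q$ of $\Psi_k$ contribute simultaneously and the kernels must be rearranged to match the classical formulas exactly.
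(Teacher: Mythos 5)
Your proposal is correct and is essentially the paper's own (omitted) proof: the paper explicitly leaves the result as "a straightforward examination of the Clifford products and exterior products used in the definition of $C_k$ and $N^+$", which is precisely your grade-by-grade computation on the three pure inputs $u_1$, $n\times v$, $*(u_2n)$ together with the identifications $v_{\mathrm{out}}=-n(x)\times V$ and $u_2=n(x)\cdot b$. One caution on the bivector row you flag as delicate: carrying out your scheme, the $(3,2)$ entry comes from $q\,(n(n\times v))=(-ik\Phi_k)(-{*v})$, whose two minus signs cancel and give $+2ik\int_{\partial\Omega}\Phi_k(y-x)\,n(x)\cdot v(y)\,dy$, so the sign of the printed $M_k'$ should be double-checked against this computation.
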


The proof is a straightforward examination of the Clifford products and exterior products used in the definition
of $C_k$ and $N^+$, which we leave to the reader.

The relation between the three classical integral operators above, which we have seen amount to the  compression $N^+C_k N^+$, and 
the rotation operator $C_kN$, which is closely related to our spin integral equation, is now as follows. 
Introducing the cosine operator
$$\tfrac 12((C_kN)+ (C_kN)^{-1}),$$
we see that
$\lambda\mapsto \tfrac 12(\lambda+1/\lambda)$ maps the spectrum of $C_kN$ 
onto the spectrum of this cosine operator.
This in turn is the direct sum of
$N^+C_kN^+$ and $-N^-C_kN^-$, and these two compressions are similar operators,
intertwined by the Hodge star map $*$.
For more details, we refer the reader to \cite{Ax1}.
\Bk

\bibliographystyle{acm}

\end{document}